\begin{document}
\theoremstyle{plain}
\newtheorem{thm}{Theorem}[section]
\newtheorem{theorem}[thm]{Theorem}
\newtheorem{lemma}[thm]{Lemma}
\newtheorem{corollary}[thm]{Corollary}
\newtheorem{corollary*}[thm]{Corollary*}
\newtheorem{proposition}[thm]{Proposition}
\newtheorem{proposition*}[thm]{Proposition*}
\newtheorem{conjecture}[thm]{Conjecture}
\theoremstyle{definition}
\newtheorem{construction}{Construction}
\newtheorem{notations}[thm]{Notations}
\newtheorem{question}[thm]{Question}
\newtheorem{problem}[thm]{Problem}
\newtheorem{remark}[thm]{Remark}
\newtheorem{remarks}[thm]{Remarks}
\newtheorem{definition}[thm]{Definition}
\newtheorem{claim}[thm]{Claim}
\newtheorem{assumption}[thm]{Assumption}
\newtheorem{assumptions}[thm]{Assumptions}
\newtheorem{properties}[thm]{Properties}
\newtheorem{example}[thm]{Example}
\newtheorem{comments}[thm]{Comments}
\newtheorem{blank}[thm]{}
\newtheorem{observation}[thm]{Observation}
\newtheorem{defn-thm}[thm]{Definition-Theorem}

\newcommand{\sM}{{\mathcal M}}


\title[On a proof of the Bouchard-Sulkowski conjecture]{On a proof of the Bouchard-Sulkowski conjecture}
       \author{Shengmao Zhu}
        \address{Department of Mathematics and Center of Mathematical Sciences, Zhejiang University, Hangzhou, Zhejiang 310027, China}
        \email{zhushengmao@gmail.com}

        \begin{abstract}
        In this short note, we give a proof of the free energy part of the BKMP conjecture
        of $\mathbb{C}^3$ proposed by Bouchard and Sulkowski
        \cite{BS}. Hence the proof of the full BKMP conjecture for the case
        of $\mathbb{C}^3$ has been finished.
        \end{abstract}
    \maketitle

\section{Introduction}

Motivated by B. Eynard and his collaborators' series works on matrix
model \cite{Ey1,EO1,EO2}, V. Bourchard, A. Klemm, M. Mari\~no and S.
Pasquetti \cite{BKMP1} proposed a new approach (Remodeling the
B-model) to compute the topological string amplitudes for local
Calabi-Yau manifolds and conjectured that the remodeling approach is
equivalent to the Gromov-Witten theory of corresponding toric
Calabi-Yau manifolds \cite{BKMP2}. In particularly, for the case of
$\mathbb{C}^3$, V. Bouchard and M. Mari\~no \cite{BM} calculated the
correlation functions by remodeling approach and conjectured that
they are equal to the topological vertex computed by Gromov-Witten
theory. Later, L. Chen and J. Zhou \cite{Ch, Zhou} gave the rigorous
proof independently based on the symmetric form of cut-and-join
equation of Mari\~no-Vafa formula proved in \cite{LLZ}(see also
\cite{Ey2} for a new proof).

Recently, V. Bouchard and P. Sulkowski \cite{BS} proposed the
following free energy part of the BKMP conjecture for the case of
$\mathbb{C}^3$ (Conjecture 2 in \cite{BS}).
\begin{conjecture}
Let $\Sigma_f$ be the framed curve mirror to $X=\mathbb{C}^3$. Then
the free energies obtained through the Eynard-Orantin recursion are
given by:
\begin{align*}
F^{(g)}=\frac{1}{2}(-1)^g\times\frac{|B_{2g}||B_{2g-2}|}{2g(2g-2)(2g-2)!}.
\end{align*}
\end{conjecture}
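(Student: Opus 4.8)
\emph{The strategy.} The plan is to reduce the free energies $F^{(g)}$, i.e.\ the $n=0$ Eynard--Orantin invariants of $\Sigma_f$, to the $n=1$ invariants $\omega_{g,1}$, which are already available. Indeed, by the already established $n\ge 1$ part of the BKMP conjecture for $\mathbb{C}^3$ (Bouchard--Mari\~no \cite{BM}, and the proofs of Chen \cite{Ch}, Zhou \cite{Zhou} and Eynard \cite{Ey2}), the differential $\omega_{g,1}$ on $\Sigma_f$ is the generating series of the one--leg topological vertex amplitudes at framing $f$, equivalently of the associated one--partition Hodge integrals via the Mari\~no--Vafa formula \cite{LLZ}; so I may treat it as explicit input. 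The reduction itself is the Eynard--Orantin ``dilaton'' identity: for $g\ge 2$,
\[ F^{(g)}=\frac{1}{2-2g}\sum_{a}\operatorname{Res}_{z\to a}\Phi(z)\,\omega_{g,1}(z), \]
where the sum runs over the ramification points $a$ of $x$ on $\Sigma_f$ and $\Phi$ is a local primitive of $\omega_{0,1}=y\,dx$ near $a$ (the constant of integration drops out since $\omega_{g,1}$ has poles only at the $a$'s, where its residues sum to zero). Thus $F^{(g)}$ is completely determined by $\omega_{g,1}$.

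\emph{Carrying it out.} Next I would fix an explicit rational uniformization of the genus--zero curve $\Sigma_f$, locate the ramification point(s) of $x$, and expand $\Phi$ there into a power series (it is holomorphic there, since $y\,dx$ vanishes at a simple zero of $dx$). As $\omega_{g,1}$ is known and has its only poles at those points, with explicitly computable principal parts, the residue above becomes a finite sum of terms (Taylor coefficient of $\Phi$)\,$\times$\,(Laurent coefficient of $\omega_{g,1}$).

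\emph{Recognizing the answer.} It then remains to evaluate that finite sum. The Laurent coefficients of $\omega_{g,1}$ entering it carry, through the torus--localization weights of the one--leg vertex, factors governed by the Stirling series of $\log\Gamma$, whose coefficients are $B_{2k}/\bigl(2k(2k-1)\bigr)$; pairing these against the coefficients of $\Phi$ makes the sum collapse into a product of two such Bernoulli contributions, yielding
\[ F^{(g)}=\tfrac12(-1)^g\,\frac{|B_{2g}|\,|B_{2g-2}|}{2g(2g-2)(2g-2)!}. \]
Equivalently, one may feed the local expansion of $\omega_{0,1}$ at the ramification point into Eynard's graph--sum formula expressing $F^{(g)}$ through $\psi$-- and $\lambda$--classes on $\overline{\mathcal M}_g$; the sum should telescope to $\chi(\mathbb{C}^3)\int_{\overline{\mathcal M}_g}\lambda_{g-1}^3=\int_{\overline{\mathcal M}_g}\lambda_{g-1}^3$, and the Faber--Pandharipande evaluation of this Hodge integral reproduces the same formula (up to the usual overall sign relating $\omega_{g,0}$ and the Gromov--Witten free energy).

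\emph{The main obstacle.} The delicate step is the last one: pushing the Laurent expansion at the ramification point far enough and proving the exact Bernoulli identity that collapses the finite sum, together with checking that the framing $f$ drops out --- as it must, the right--hand side being $f$--independent. A practical route is to first read off $\partial_f F^{(g)}=0$ directly from the residue formula, so that it suffices to treat a single convenient value of $f$; for that value one can cross--check the Bernoulli identity against the classical small--$t$ expansion of the MacMahon function $M(q)=\prod_{n\ge 1}(1-q^n)^{-n}$ with $q=e^{-t}$, whose $t^{2g-2}$--coefficient is, up to the same sign, exactly the right--hand side above.
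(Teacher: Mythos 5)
Your setup coincides with the paper's: you start from the same residue formula $F^{(g)}=\frac{(-1)^g}{2-2g}\operatorname{Res}_{q\to q_*}\theta(q)W^{(g,1)}(q)$, feed in the known Hodge-integral expression for $W^{(g,1)}$ (Chen, Zhou), and aim to land on the Faber--Pandharipande evaluation. But the heart of the argument --- actually evaluating the residue pairing --- is exactly the step you defer as ``the delicate step,'' and the mechanism you sketch for it is not the one that works. There is no finite sum of Bernoulli contributions from a Stirling expansion that must ``collapse'' via a Bernoulli identity. What actually happens (the paper's Lemma 3.2) is a clean vanishing statement: writing $W^{(g,1)}=(-1)^{g+1}\sum_n\langle\tau_n\Lambda_g^\vee(1)\Lambda_g^\vee(-f-1)\Lambda_g^\vee(f)\rangle_g\Psi_n(y;f)$, one shows by an elementary integration-by-parts-on-residues computation in the local coordinate $z=y+\frac{f}{1+f}$ that $\operatorname{Res}_{y=-f/(1+f)}\theta(y)\Psi_n(y;f)$ equals $-\frac{1}{f(1+f)}$ for $n=1$ and vanishes for $n=0$ and all $n\ge 2$. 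So only a single Hodge integral survives; the Bernoulli numbers enter once, at the very end, through $\langle\lambda_g\lambda_{g-1}\lambda_{g-2}\rangle_g$. You would then still need the paper's Lemma 3.1 (the degree $3g-3$ part of $\Lambda_g^\vee(1)\Lambda_g^\vee(-f-1)\Lambda_g^\vee(f)$ is $(-1)^{g-1}f(f+1)\lambda_g\lambda_{g-1}\lambda_{g-2}$, which is forced by the dimension constraint once $\tau_1$ is fixed, and which cancels the $f(1+f)$ from the residue --- this is where $f$-independence comes from, not from a separate argument that $\partial_f F^{(g)}=0$) and the dilaton equation $\langle\tau_1\lambda_g\lambda_{g-1}\lambda_{g-2}\rangle_g=(2g-2)\langle\lambda_g\lambda_{g-1}\lambda_{g-2}\rangle_g$.

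Two smaller points. First, your alternative route via $\int_{\overline{\mathcal M}_g}\lambda_{g-1}^3$ is off by a factor of $2$ as written: by Mumford's relation $\lambda_{g-1}^2=2\lambda_g\lambda_{g-2}$ one has $\int\lambda_{g-1}^3=2\langle\lambda_g\lambda_{g-1}\lambda_{g-2}\rangle_g$, and the conjectured right-hand side is $(-1)^g\langle\lambda_g\lambda_{g-1}\lambda_{g-2}\rangle_g$, i.e.\ $\tfrac12(-1)^g\int\lambda_{g-1}^3$; the missing $\tfrac12$ is the standard constant-map prefactor you dropped. Second, for this mirror curve the relevant one-form is $\omega=\log y\,\frac{dx}{x}$ (both variables are $\mathbb{C}^*$-valued), not $y\,dx$; the primitive $\theta$ involves $(\log y)^2$ and $\mathrm{Li}_2$, and the explicit residue computation depends on this. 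As it stands the proposal is a plausible strategy outline with the decisive computation missing, not a proof.
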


In this note, we will give a proof of the Conjecture 1.1 based on a
Hodge integral identity and some residue calculations. After
finished the draft of this paper and contacted with Prof. V.
Bouchard, the author knew that this conjecture has also been proved
by V. Bouchard and his collaborators \cite{BCMS} at the same time.

\section{The BKMP conjecture}
Let us consider a Riemann surface with genus $\overline{g}$,
\begin{align*}
\Sigma=\{x,y\in \mathbb{C}^*|H(x,y;z_a)=0\}\subset
\mathbb{C}^*\times\mathbb{C}^*
\end{align*}
where $z_a, a=1,..,k$ are the deformation parameters of the complex
structure of $\Sigma$. $H(x,y;z_a)$ is a polynomial in $(x,y)$ which
are $\mathbb{C}^*$-variables.  Let $q_i, i=1,..,2\overline{g}+2$ be
the ramification points of $\Sigma$ and on the neighborhood of
$q_i$, one can find two distinct points $q, \overline{q} \in \Sigma$
such that $x(q)=x(\overline{q})$. We mention that the mirror curve
of a toric Calabi-Yau 3-fold satisfies these conditions.

The recursion process starts with the following ingredients.
\subsection{Ingredients} First, one needs the meromorphic
differential
$
\omega(p)=\log y(p)\frac{dx(p)}{x(p)}
$
on $\Sigma$.

One also needs the Bergmann kernel $B(p,q)$ on $\Sigma$ defined by
the following conditions.
\begin{align*}
&B(p,q)\sim_{p\rightarrow q}\frac{dpdq}{(p-q)^2}+\text{finite}.\\
&B(p,q)\, \text{is holomorphic except $p=q$}.\\
&\oint_{A_\alpha}B(p,q)=0,\ \alpha=1,..,\overline{g}.
\end{align*}
where $(A_\alpha,B^\alpha)$ is a canonical symplectic basis of
one-cycles on $\Sigma$.

For the case of $\overline{g}=0$, the Bergman kernel is given by
\begin{align*}
B(p_1,p_2)=\frac{dy_1dy_2}{(y_1-y_2)^2}, \ y_i=y(x_i).
\end{align*}

\subsection{BKMP's construction}
 Inspired by the work \cite{Mar}, Bouchard, Klemm, Mari\~no and Pasquetti \cite{BKMP1,BKMP2}
 defined the free energy $F^{(g,h)}(p_1,..,p_h)$ on the mirror curve
 $H(x,y;z_a)=0$ based on the topological recursions construced by B. Eynard and N. Orantin \cite{EO1} as follows.
\begin{align*}
&F^{(g,h)}(p_1,..,p_h)=\int W^{(g,h)}(p_1,..,p_h),\\\nonumber
&W^{(0,1)}(p)=\omega(p),\\\nonumber &
W^{(0,2)}(p_1,p_2)=B(p_1,p_2)-\frac{dp_1dp_2}{(p_1-p_2)^2},\\\nonumber
&W^{(g,h)}(p_1,..,p_h)=\tilde{W}^{g,h}(p_1,..,p_h),\ \text{for} \,
(g,h)\neq (0,1),\ (0,2).
\end{align*}
where $\tilde{W}^{(g,h)}(p_1,...,p_h)$ is a multilinear meromorphic
differential defined by the following topological recursions.
\begin{align*}
&\tilde{W}^{(0,1)}(p)=0, \tilde{W}^{(0,2)}(p,q)=B(p,q),\\\nonumber
&\tilde{W}^{(g,h+1)}(p,p_1,..,p_h)=\sum_{q_i}Res_{q=q_i}\frac{dE_{q,\overline{q}}(p)}{\omega(q)-\omega(\overline{q})}
\left(\tilde{W}^{(g-1,h+2)}(q,\overline{q},p_1,..,p_h)\right.\\\nonumber
&\left.\qquad\qquad\qquad\qquad+\sum_{l=0}^g\sum_{J\subset
H}\tilde{W}^{(g-l,|J|+1)}(q,p_J)\tilde{W}^{(l,|H|-|J|+1)}(\overline{q},p_{H\setminus
J})\right), \\\nonumber &H=\{1,..,h\}, J=\{i_1,..,i_j\}\subset H,
p_J=\{p_{i_1},..,p_{i_j}\}, \\\nonumber &
dE_{q,\overline{q}}(p)=\frac{1}{2}\int_{q}^{\overline{q}}B(p,\psi),
\ \text{near a ramification point} \ q_i.
\end{align*}

Moreover, in \cite{BKMP2}, they defined $F^{(g)}$ (\ $g\in
\mathbb{Z},\ g\geq 2$ ) on $\Sigma$ by
\begin{align*}
F^{(g)}=\frac{(-1)^g}{2-2g}\sum_{q_i}
Res_{q=q_i}\theta(q)W^{(g,1)}(q),
\end{align*}
where $\theta(q)$ is any primitive of $\omega(q)$ given by
$d\theta(q)=\omega(q)$. And $F^{(1)}$ is defined separately as
\begin{align*}
F^{(1)}=-\frac{1}{2}\log \tau_B-\frac{1}{24}\log \prod_i
\omega^{'}(q_i),
\end{align*}
where $\omega^{'}(q_i)=\frac{1}{dz_i(p)}d\left(\frac{\log
y(x)}{x}\right)|_{p=q_i}$, \ $z_i(p)=\sqrt{x(p)-x(q_i)}$ and
$\tau_B$ is the Bergmann tau-function \cite{EO1}.

Then the BKMP conjecture for toric Calabi-Yau 3-fold can be
formulated as follow (Conjecture 1 in \cite{BS}).
\begin{conjecture}
Let $\Sigma_f$ be the framed mirror curve to a toric Calabi-Yau
threefold $X$.

1. The free energies $F^{(g)}$ constructed by the Eynard-Orantin
recursion are mapped by the mirror map to the genus g generating
functions of Gromov-Witten invariants of $X$.

2. The correlation functions $F^{(g,h)}$ are mapped by the
open/closed mirror map to the generating functions of framed open
Gromov-Witten invariants.
\end{conjecture}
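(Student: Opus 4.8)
The plan is to prove both parts of Conjecture 1.2 for an arbitrary toric Calabi--Yau threefold $X$ by reducing the global statement to the local building block $\mathbb{C}^3$ and then controlling the gluing. The geometry of $X$ is encoded in its toric graph $\Gamma$, whose trivalent vertices each carry a copy of $\mathbb{C}^3$ and whose edges carry framing and K\"ahler data. Both the Gromov--Witten side and the Eynard--Orantin side are assembled from local vertex contributions glued along the edges of $\Gamma$, so the argument naturally splits into a base case (the vertex) and a gluing step (the edges). First I would fix the dictionary between the two presentations of $\Gamma$: on the A-model side $\Gamma$ records the combinatorics of one-dimensional torus orbits, while on the B-model side the same $\Gamma$ is the tropical/amoeba degeneration of the mirror curve $\Sigma_f$, so that the vertices of $\Gamma$ are in bijection with the ramification points $q_i$ of $\Sigma_f$ and the internal edges with the ``necks'' of $\Sigma_f$ joining distinct ramification points.

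On the A-model side I would invoke the topological vertex formula, whose Hodge-integral form rests on the Mari\~no--Vafa identity proved in \cite{LLZ}: the open and closed Gromov--Witten generating functions of $X$ are a sum over partition-colorings of the edges of $\Gamma$, each vertex contributing a vertex amplitude $C_{\lambda\mu\nu}$ and each internal edge contributing a propagator built from the edge framing and the K\"ahler parameter. The point of this step is that the vertex amplitude is already expressed through Hodge integrals on $\overline{\mathcal M}_{g,n}$, which is exactly the shape produced by the Eynard--Orantin recursion on the local spectral curve of $\mathbb{C}^3$. The base case is therefore the identification of the vertex amplitude with the Eynard--Orantin invariants of the framed mirror curve of $\mathbb{C}^3$: the free-energy instance of this base case is precisely Conjecture 1.1 established in the present note, and the open correlation-function instance is the Bouchard--Mari\~no statement proved in \cite{BM,Ch,Zhou}.

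On the B-model side I would use the local decomposition of the Eynard--Orantin invariants at the ramification points of $\Sigma_f$. Near each $q_i$ one expands the recursion kernel $dE_{q,\overline q}(p)/(\omega(q)-\omega(\overline q))$ in the local coordinate $z_i=\sqrt{x-x(q_i)}$; taking residues as prescribed in \cite{EO1,BKMP2} assembles $\tilde W^{(g,h)}$ into intersection numbers on $\overline{\mathcal M}_{g,n}$ weighted by the local Taylor data of $\omega=\log y\,\tfrac{dx}{x}$ and of the Bergman kernel $B$. The local (coincident-ramification) part of $B$ reproduces the vertex amplitude already matched in the base case, while the off-diagonal part of $B$ connecting two distinct ramification points $q_i,q_j$ plays the role of an edge: the plan is to show that, after substituting the flat coordinates supplied by the open/closed mirror map, this off-diagonal Bergman propagator reproduces exactly the edge propagator of the topological vertex, including its framing dependence.

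I expect the main obstacle to be the gluing/compatibility step rather than the base case. One must match the analytic data of $\Sigma_f$---the globally defined Bergman kernel expanded across distinct ramification points, together with the framing encoded in the choice of $\log y\,\tfrac{dx}{x}$ and in the cycle normalization $\oint_{A_\alpha}B=0$---against the purely combinatorial gluing of the topological vertex, namely edge framings, orientation sign factors, and the identification of K\"ahler and open moduli with periods on $\Sigma_f$ under the mirror map. The open sector of Part~2 compounds this difficulty: one must track the brane insertion points $p_1,\dots,p_h$ on the correct sheets of $\Sigma_f$ and verify that the open mirror map carries $F^{(g,h)}$ to the framed open Gromov--Witten generating functions. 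I would handle both by organizing the two sides as generating series in the partition-valued edge variables and checking, term by term, that the recursion's bilinear splitting of a half-edge in $\tilde W^{(g,h+1)}$ coincides with the Hodge-integral gluing law for the vertex; verifying that the framing-twisted propagators and the mirror map conspire so that these two gluings agree is the technical heart of the proof.
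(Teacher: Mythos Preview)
The paper does not prove this statement: Conjecture~2.1 is the general BKMP conjecture for an arbitrary toric Calabi--Yau threefold, and the paper records it only as a conjecture. What the paper actually proves is the much more restricted Conjecture~1.1, namely the closed-string free-energy case for $X=\mathbb{C}^3$, by combining the Chen--Zhou formula for $W^{(g,1)}$, a direct residue computation (Lemma~3.2), a Mumford-relation identity (Lemma~3.1), the dilaton equation, and the Faber--Pandharipande value of $\langle\lambda_g\lambda_{g-1}\lambda_{g-2}\rangle_g$. There is therefore no ``paper's own proof'' of Conjecture~2.1 to compare against.

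As for your proposal itself: it is a reasonable research outline, and indeed broadly resembles the strategy later used to attack BKMP in full (vertex decomposition on the A-side via the topological vertex, local expansion of the Eynard--Orantin recursion at ramification points on the B-side, and matching the two gluings). But it is not a proof. The base case you cite is in hand (the open part by \cite{Ch,Zhou}, the free-energy part by this very paper), yet the gluing step is the entire difficulty and you have only described what must be checked: that the off-diagonal Bergman kernel between distinct ramification points reproduces, after the mirror map, the edge propagator of the topological vertex with the correct framing and sign conventions, and that the bilinear splitting in the Eynard--Orantin recursion matches the Hodge-integral gluing law. None of these identifications is carried out; each requires substantial analytic input (global control of $B(p,q)$ on a higher-genus $\Sigma_f$, the explicit form of the mirror map, and compatibility of the $A$-cycle normalization with the choice of K\"ahler parameters). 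Your own phrasing---``I expect the main obstacle to be the gluing/compatibility step'' and ``verifying that the framing-twisted propagators and the mirror map conspire\ldots is the technical heart of the proof''---acknowledges that the heart is missing. So the proposal identifies the right architecture but leaves the load-bearing step as a hope rather than an argument.
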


\section{BKMP conjecture for the case of $\mathbb{C}^3$}
In this section, we restrict us to consider the special toric
Calabi-Yau 3-fold $\mathbb{C}^3$ which has the framed mirror curve
\begin{align*}
\Sigma_{f}=\{H(x,y):=x+y^{f}+y^{f+1}=0\} \subset (\mathbb{C}^{*})^2.
\end{align*}
$\Sigma_f$ has only one ramification point
\begin{align*}
y_*=\frac{-f}{(f+1)},\quad x_*=\frac{f^f}{(-1-f)^{-1-f}}.
\end{align*}

By the definition of $\theta(q)$ given in Section 2,  we have
\begin{align}
\theta(y)=\frac{f}{2}(\log y)^2+\log y \log(1+y)+Li_2(-y).
\end{align}

We define the differential form $\Psi_{n}(y;f)$ for $n\geq 0$ as
follow:
\begin{align}
\Psi_n(y;f)=-dy\frac{((1+f)y+f)}{y(y+1)}\left(\frac{y(y+1)}{(1+f)y+f}\frac{d}{dy}\right)^{n+1}\frac{1}{(1+f)((1+f)y+f)}.
\end{align}
For examples, when $n=0$ and $1$:
\begin{align*}
&\Psi_0(y;f)=dy\frac{1}{(f+(f+1)y)^2};\\
&\Psi_1(y;f)=-dy\frac{3(1+f)y(y+1)-(1+2y)(f+(f+1)y)}{(f+(f+1)y)^4}.
\end{align*}

For convenience, in the following exposition, we also introduce the
notation $\hat{\Psi}_{n}(y;f)$ by the relationship
$\Psi_{n}(y;f)=-\hat{\Psi}_{n}(y;f)dy$.

By the Eynard-Orantin topological recursions introduced in Section
2, we have
\begin{align*}
&W^{(0,3)}(y_1,y_2,y_3)=-(f(f+1))^2\Psi_0(y_1;f)\Psi_0(y_2;f)\Psi_0(y_3;f);\\
&W^{(0,4)}(y_1,y_2,y_3,y_4)=(f(f+1))^3\sum_{i=1}^{4}\Psi_1(y_i;f)\prod_{j\neq
i}\Psi_0(y_j;f);\\
&W^{(1,1)}(y)=\frac{1}{24}\left((1+f+f^2)\Psi_0(y;f)-f(f+1)\Psi_1(y;f)\right);\\
&W^{(1,2)}(y_1,y_2)\frac{1}{24}(-(1+f+f^2)\Psi_0(y_1;f)\Psi_1(y_2;f)\\\nonumber
&+f(1+f)\Psi_0(y_1;f)\Psi_2(y_2;f)+(y_1\leftrightarrow
y_2)+f(1+f)\Psi_1(y_1;f)\Psi_1(y_2;f));\\
&W^{(2,1)}(y)=\frac{1}{5760}\left(2f(f+1)\Psi_1(y;f)-7(1+f+f^2)^2\Psi_2(y;f)\right.\\\nonumber
&\left.+12f(1+2f+2f^2+f^3)\Psi_3(y;f)-5f^2(f+1)^2\Psi_4(y;f)\right).
\end{align*}

For the general $g\geq 2$ and $h\geq 1$,  L. Chen \cite{Ch} and J.
Zhou \cite{Zhou} have proved the following identity independently
(See also \cite{Zhu}).
\begin{align*}
W^{(g,h)}(y_1,..,y_h)=(-1)^{g+h}(f(f+1))^{h-1}\sum_{n_i\geq
0}\langle
\prod_{i=1}^h\tau_{n_i}\Lambda_g^{\vee}(1)\Lambda_g^{\vee}(-f-1)\Lambda_g^{\vee}(f)
\rangle_g\prod_{i=1}^{h}\Psi_{n_i}(y_i;f)
\end{align*}
where
$\Lambda_{g}^{\vee}(t)=t^g-t^{g-1}\lambda_1+\cdots+(-1)^g\lambda_g$.

In particularly,
\begin{align*}
W^{(g,1)}(y)=(-1)^{g+1}\sum_{n\geq 0}^{2g-2}\langle \tau_n
\Lambda_g^{\vee}(1)\Lambda_g^{\vee}(-f-1)\Lambda_g^{\vee}(f)\rangle_g\Psi_b(y;f).
\end{align*}

Thus the free energy part of the BKMP conjecture for the case of
$\mathbb{C}^3$ is given by
\begin{align*}
F^{(g)}&=\frac{(-1)^g}{2-2g}
Res_{y=\frac{-f}{1+f}}\theta(y)W^{(g,1)}(y)\\\nonumber
&=\frac{1}{2g-2}\sum_{n\geq 0}\langle\tau_n
\Lambda_g^{\vee}(1)\Lambda_g^{\vee}(-f-1)\Lambda_g^{\vee}(f)\rangle_g
Res_{y=\frac{-y}{1+y}}\theta(y)\Psi_n(y;f).
\end{align*}

Then Conjecture 1.1 will be finished by the following two lemmas and
the Hodge integral identity \cite{FabP,LLZ2},
\begin{align*}
\langle\lambda_{g}\lambda_{g-1}\lambda_{g-2}\rangle_g=\frac{1}{2(2g-2)!}\frac{|B_{2g-2}|}{2g-2}\frac{|B_{2g}|}{2g}.
\end{align*}

\begin{lemma}
The degree $3g-3$ part of
$\Lambda_g^{\vee}(1)\Lambda_g^{\vee}(-f-1)\Lambda_g^{\vee}(f)$ is
given by
\begin{align*}
(-1)^{g-1}f(f+1)\lambda_g\lambda_{g-1}\lambda_{g-2}.
\end{align*}
\end{lemma}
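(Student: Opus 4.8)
The plan is to compute the degree $3g-3$ (top-degree) part of the product $\Lambda_g^{\vee}(1)\Lambda_g^{\vee}(-f-1)\Lambda_g^{\vee}(f)$ directly from the definition $\Lambda_g^{\vee}(t)=\sum_{j=0}^g(-1)^j t^{g-j}\lambda_j$, using the fact that $\lambda_j$ has degree $j$ in the Chow ring of $\overline{\mathcal{M}}_g$ and that $\lambda_j=0$ for $j>g$ while $\lambda_g^2=0$. Since each factor contributes a sum of terms $(-1)^{j}t^{g-j}\lambda_j$, the product expands into a sum over triples $(j_1,j_2,j_3)$ of terms $(-1)^{j_1+j_2+j_3}1^{g-j_1}(-f-1)^{g-j_2}f^{g-j_3}\lambda_{j_1}\lambda_{j_2}\lambda_{j_3}$, and the degree $3g-3$ condition forces $j_1+j_2+j_3=3g-3$, i.e. $(g-j_1)+(g-j_2)+(g-j_3)=3$.

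First I would enumerate the nonnegative integer solutions of $a_1+a_2+a_3=3$ with $a_i=g-j_i$, subject to $j_i\le g$ (automatic) and, crucially, $j_i\le g$ with the vanishing $\lambda_{j}=0$ for $j>g$ being irrelevant here, but using instead that monomials $\lambda_{j_1}\lambda_{j_2}\lambda_{j_3}$ with any $j_i>g$ vanish — which cannot occur since $j_i=g-a_i\le g$ — and more importantly the Mumford relation: any monomial in the $\lambda$'s of top degree $3g-3$ on $\overline{\mathcal{M}}_g$ reduces to a multiple of $\lambda_g\lambda_{g-1}\lambda_{g-2}$, and in particular $\lambda_g^2=0$ kills every solution where two of the $a_i$ coincide in a way forcing a repeated top index. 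Concretely, the partitions of $3$ into three parts are $(3,0,0)$, $(2,1,0)$, $(1,1,1)$; the corresponding index triples are $\{g-3,g,g\}$, $\{g-2,g-1,g\}$, $\{g-1,g-1,g-1\}$. The first has $\lambda_g^2=0$, the third has... one must check whether $\lambda_{g-1}^3$ vanishes — it does, since $\lambda_{g-1}^3$ has degree $3g-3$ but is a known relation class; more cleanly, all such monomials are proportional to $\lambda_g\lambda_{g-1}\lambda_{g-2}$ and one identifies the proportionality using Mumford's relation $\mathrm{ch}_{2k}(\mathbb{E})=0$, equivalently the generating-function identity $\sum\lambda_i t^i \cdot \sum(-1)^j\lambda_j t^j = 1$ in degrees $>0$. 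Actually the crucial known fact is that $\lambda_g\lambda_{g-1}$ already forces the remaining factor of degree $g-2$ to be a rational multiple of $\lambda_{g-2}$, and that $\lambda_g\lambda_{g-1}\lambda_{g-2}$ is the unique (up to scalar) nonzero top monomial, with $\lambda_{g-1}^3$ and $\lambda_g\lambda_{g-2}\lambda_{g-1}$ related by known coefficients.

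Next I would collect the surviving terms. Only the solution $(a_1,a_2,a_3)$ a permutation of $(2,1,0)$ survives cleanly (giving index set $\{g,g-1,g-2\}$), while $(3,0,0)$-type terms carry a $\lambda_g\lambda_j$ with the third index also $g$, hence vanish, and the $(1,1,1)$ term $\lambda_{g-1}^3$ must be rewritten via Mumford's relation as a multiple of $\lambda_g\lambda_{g-1}\lambda_{g-2}$ and added in. Summing the six permutations of $(2,1,0)$ with the weights $1^{a_1}(-f-1)^{a_2}f^{a_3}$ and signs $(-1)^{j_1+j_2+j_3}=(-1)^{3g-3}=(-1)^{g-1}$, one gets $(-1)^{g-1}\lambda_g\lambda_{g-1}\lambda_{g-2}$ times the symmetric sum $\sum_{\text{perms of }(2,1,0)} 1^{a_1}(-f-1)^{a_2}f^{a_3}$; combined with the contribution of $\lambda_{g-1}^3$ this should telescope to $(-1)^{g-1}f(f+1)\lambda_g\lambda_{g-1}\lambda_{g-2}$. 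The expected main obstacle is the bookkeeping with the Mumford relation for the $\lambda_{g-1}^3$ term — getting its exact rational coefficient in terms of $\lambda_g\lambda_{g-1}\lambda_{g-2}$ right — since all the $(2,1,0)$ contributions are straightforward polynomial algebra in $f$, whereas the $(1,1,1)$ term requires invoking the structure of the tautological ring in top degree; I would handle this by specializing the variables (e.g. the known values at $f=1$ or symmetry $f\leftrightarrow -f-1\leftrightarrow -1$) to pin down the coefficient, or by using the vanishing $\mathrm{ch}_{2k}(\mathbb{E})=0$ to express $\lambda_{g-1}^3$ explicitly.
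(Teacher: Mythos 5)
Your overall framework is the same as the paper's: expand the three factors, observe that degree $3g-3$ forces the index multiset to be one of $\{g,g,g-3\}$, $\{g,g-1,g-2\}$, $\{g-1,g-1,g-1\}$, kill the first with $\lambda_g^2=0$, and reduce the third to a multiple of $\lambda_g\lambda_{g-1}\lambda_{g-2}$. The symmetric sum over permutations of $(2,1,0)$ does come out to $3f(f+1)$ in your sign convention, so that part is indeed routine. But there is a genuine gap exactly where you flag the ``main obstacle'': you never determine the coefficient $c$ in $\lambda_{g-1}^3=c\,\lambda_g\lambda_{g-1}\lambda_{g-2}$, and at one point you even assert that $\lambda_{g-1}^3$ vanishes --- it does not. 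The correct relation is $\lambda_{g-1}^2=2\lambda_g\lambda_{g-2}$, hence $\lambda_{g-1}^3=2\lambda_g\lambda_{g-1}\lambda_{g-2}$, read off from the degree $2g-2$ part of Mumford's relation $\Lambda_g^\vee(1)\Lambda_g^\vee(-1)=(-1)^g$ (equivalently your identity $c(\mathbb{E})c(\mathbb{E}^\vee)=1$); this is precisely what the paper does. Had you followed the ``$\lambda_{g-1}^3=0$'' branch you would have obtained $3(-1)^{g-1}f(f+1)\lambda_g\lambda_{g-1}\lambda_{g-2}$, three times the right answer, so the coefficient $2$ is not an optional refinement but the step on which the lemma turns: the total is $(-1)^{g-1}\bigl(3-c\bigr)f(f+1)\lambda_g\lambda_{g-1}\lambda_{g-2}$, and only $c=2$ gives the stated result.

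Moreover, one of your two proposed ways of pinning down that coefficient --- specializing $f$ --- cannot work. At a generic value of $f$ you have no independent evaluation of the product's top-degree part, and at the special values $f=0,-1$ where you do (there $\Lambda_g^\vee(0)=(-1)^g\lambda_g$ and Mumford's relation collapse the product to $\lambda_g$, which has no degree $3g-3$ component), the factor $f(f+1)$ multiplying both the symmetric sum and the $\lambda_{g-1}^3$ contribution vanishes, so nothing about $c$ can be extracted. You must therefore take your other route, the $\mathrm{ch}_{2k}(\mathbb{E})=0$ / Mumford identity; once you do, the argument closes and coincides with the paper's proof.
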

\begin{proof}
By Mumford's relation:
$\Lambda_{g}^{\vee}(1)\Lambda_{g}^{\vee}(-1)=(-1)^g$, we have
$\lambda_{g-1}^2=2\lambda_{g}\lambda_{g-2}, \lambda_g^2=0$. Then the
degree $3g-3$ part of
$\Lambda_g^{\vee}(1)\Lambda_g^{\vee}(-f-1)\Lambda_g^{\vee}(f)$ is
equal to
\begin{align*}
&((-1)^g\lambda_g+(-1)^{g-1}\lambda_{g-1}+(-1)^{g-2}\lambda_{g-2})\times(-1)^g(\lambda_g+(f+1)\lambda_{g-1}+(f+1)^2\lambda_{g-2})\\\nonumber
&\times((-1)^g\lambda_g+f(-1)^{g-1}\lambda_{g-1}+f^2(-1)^{g-2}\lambda_{g-2})\\\nonumber
&=(-1)^{g-1}f(f+1)\lambda_g\lambda_{g-1}\lambda_{g-2}.
\end{align*}
\end{proof}
\begin{lemma}

\makeatletter
\let\@@@alph\@alph
\def\@alph#1{\ifcase#1\or \or $'$\or $''$\fi}\makeatother
\begin{subnumcases}
{Res_{y=\frac{-f}{1+f}}\theta(y)\Psi_{n}(y;f)=} -\frac{1}{f(1+f)},
&$n=1$, \label{eq:a1}\nonumber \\\nonumber 0, &$n\geq 2$ or
$n=0$.\label{eq:a2}
\end{subnumcases}
\makeatletter\let\@alph\@@@alph\makeatother
\end{lemma}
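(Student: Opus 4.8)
\emph{Proof plan.}
The plan is to recognise $\Psi_{n}(y;f)$ as, up to a sign, an exact meromorphic differential. Write $A:=\dfrac{y(y+1)}{(1+f)y+f}$ and $D:=A\dfrac{d}{dy}$, so that $\dfrac{dx}{x}=A^{-1}\,dy$ on $\Sigma_{f}$; put $h_{0}(y):=\dfrac{1}{(1+f)\bigl((1+f)y+f\bigr)}$ and $h_{n}:=D^{n}h_{0}$. Reading the definition of $\Psi_{n}$, one further application of $D$ followed by the prefactor $-A^{-1}\,dy$ is exactly $-d$, so
\[
\Psi_{n}(y;f)=-\,dh_{n}(y).
\]
Since residues of exact meromorphic $1$-forms vanish and $d\theta=\omega$, integration by parts gives
\[
\operatorname{Res}_{y=\frac{-f}{1+f}}\theta(y)\,\Psi_{n}(y;f)\;=\;-\operatorname{Res}_{y=\frac{-f}{1+f}}\theta\,dh_{n}\;=\;\operatorname{Res}_{y=\frac{-f}{1+f}}h_{n}\,\omega .
\]

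The next step is to move the operator $D$ off $h_{n}$ and onto $\theta$. Because $d\theta=\omega=\log y\,\dfrac{dx}{x}=(\log y)\,A^{-1}\,dy$, one reads off $D\theta=\log y$; and writing $h_{n}=Dh_{n-1}=A\,h_{n-1}'$ gives $h_{n}\,\omega=A\,h_{n-1}'\cdot\log y\cdot A^{-1}\,dy=(\log y)\,dh_{n-1}$ for $n\ge 1$. One more integration by parts then reduces the residue, for $n\ge 1$, to the residue of $h_{n-1}\,\dfrac{dy}{y}$ at the ramification point $y=\tfrac{-f}{1+f}$. The case $n=0$ is handled directly: $h_{0}\,\omega=\dfrac{\log y}{(1+f)\,y(y+1)}\,dy$ is holomorphic at $y=\tfrac{-f}{1+f}$, so the residue is $0$.

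The remaining ingredient is a structural description of the $h_{m}$, proved by induction on $m$ from $h_{m+1}=A\,h_{m}'$. Since $A$ has a simple pole at $y=\tfrac{-f}{1+f}$ and simple zeros at $y=0$ and $y=-1$, while differentiation raises pole order by one (and the relevant leading Laurent coefficients never cancel), for each $m\ge 1$ the function $h_{m}$ is rational, has its only pole at $y=\tfrac{-f}{1+f}$ of order $2m+1$, vanishes at $y=0$ and $y=-1$, and vanishes at $y=\infty$. In the variable $w=(1+f)y+f$ this reads $h_{m}=\dfrac{(w-f)(w+1)\,p_{m}(w)}{w^{\,2m+1}}$ for some polynomial $p_{m}$ with $\deg p_{m}\le 2m-2$. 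Substituting $m=n-1$ into the reduction of the previous paragraph and using $\dfrac{dy}{y}=\dfrac{dw}{w-f}$, for $n\ge 2$ the residue equals, up to sign, the coefficient of $w^{\,2n-2}$ in the polynomial $(w+1)p_{n-1}(w)$, whose degree is at most $2n-3$; hence it vanishes, which settles $n\ge 2$. For $n=1$ only the simple pole of $h_{0}\,\dfrac{dy}{y}=\dfrac{1}{(1+f)\bigl((1+f)y+f\bigr)}\,\dfrac{dy}{y}$ at $y=\tfrac{-f}{1+f}$ survives, and a one-line evaluation there produces (up to the overall sign convention) the value $-\tfrac{1}{f(1+f)}$ recorded in the statement.

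I expect the two points needing care to be the inductive structural statement about the $h_{m}$ --- checking that the pole at the ramification point keeps order exactly $2m+1$ while the zeros at $y=0$, $y=-1$, $y=\infty$ survive each application of $A\,\tfrac{d}{dy}$ --- and keeping track of the overall sign in the $n=1$ residue through the various normalisations; everything else is a short string of residue manipulations. One may also avoid introducing $w$ altogether: compute $D\theta=\log y$ and $D^{2}\theta=h_{0}+\tfrac{1}{1+f}$ outright, so that pushing $D$ entirely onto $\theta$ turns $\operatorname{Res}\theta\,\Psi_{n}$ for $n\ge 2$ into $(-1)^{n}\operatorname{Res}\!\bigl(h_{0}h_{n-1}\,\tfrac{dx}{x}\bigr)$, and since $h_{0}\,\tfrac{dx}{x}=\dfrac{dy}{(1+f)\,y(y+1)}$ is holomorphic at the ramification point the vanishing then follows from the Laurent expansion of $h_{n-1}$ there, which the same recursion provides.
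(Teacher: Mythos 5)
Your plan is essentially the paper's own proof, repackaged a little more cleanly. The paper also exploits exactness of $\Psi_n$ (via a recursion $\hat\Psi_n=\pm\frac{d}{dz}\bigl(\hat\Psi_{n-1}\cdot\frac{y(y+1)}{(1+f)z}\bigr)$ in the shifted variable $z=y+\frac{f}{1+f}$), integrates by parts twice against $\theta$ using $d\theta=\log y\,\frac{(1+f)y+f}{y(y+1)}\,dy$, and kills the cases $n\ge 2$ by exactly the degree count you describe (there $\hat\Psi_m$ is written as a polynomial of degree $\le 2m$ over $((1+f)z)^{2m+2}$, which is your structural statement about $h_m=A\hat\Psi_{m-1}$ in different clothing). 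Your identification $\Psi_n=-dh_n$ with $h_n=D^nh_0$ is correct and makes the mechanism more transparent than the paper's term-by-term manipulation; the $n=0$ and $n\ge 2$ cases go through as you say.

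The genuine gap is the sign you defer in the $n=1$ case: it cannot be waved off as ``overall sign convention,'' because your own chain pins it down and it comes out opposite to the statement. Indeed
\begin{equation*}
\mathrm{Res}_{y=\frac{-f}{1+f}}\theta\,\Psi_1=\mathrm{Res}\,h_1\omega=\mathrm{Res}\,(\log y)\,dh_0=-\,\mathrm{Res}_{y=\frac{-f}{1+f}}\frac{dy}{(1+f)\bigl((1+f)y+f\bigr)y}=-\frac{1}{(1+f)^2}\cdot\frac{1+f}{-f}\cdot(-1)=+\frac{1}{f(1+f)},
\end{equation*}
and a direct Laurent expansion at $f=1$ (where $\theta(-\tfrac12+u)=c_0-4\log(-\tfrac12)u^2+\tfrac{16}{3}u^3+\cdots$ and $\Psi_1=-\frac{2u^2-3/2}{16u^4}du$) confirms the value $+\tfrac12$, not $-\tfrac12$. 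The paper reaches $-\frac{1}{f(1+f)}$ only because its formula $(4)$ carries a minus sign that is inconsistent with definition $(2)$: comparing the displayed $\Psi_0,\Psi_1$ shows the correct recursion is $\hat\Psi_n=+\frac{d}{dz}\bigl(\hat\Psi_{n-1}\cdot\frac{y(y+1)}{(1+f)z}\bigr)$, which is what your $h_n=Ah_{n-1}'$ encodes. So to complete your proof you must either track down a compensating convention elsewhere (e.g.\ in the normalization of $\omega$, $\theta$ or $W^{(g,1)}$) or accept that the lemma as printed has the wrong sign for $n=1$; as written, ``a one-line evaluation produces the stated value up to sign'' is exactly the point at issue, not a detail.
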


\begin{proof}
Let $z=y+\frac{f}{1+f}$, by formula $(1)$
\begin{align*}
\theta(z)=\frac{f}{2}\left(\log\left(z-\frac{f}{1+f}\right)\right)^2+\log\left(z-\frac{f}{1+f}\right)\log\left(z+\frac{1}{1+f}\right)
+Li_2\left(-z+\frac{f}{1+f}\right).
\end{align*}
Hence,
\begin{align*}
d\theta(z)=\frac{(1+f)z\log\left(z-\frac{f}{1+f}\right)}{(z-\frac{f}{1+f})(z+\frac{1}{1+f})}dz.
\end{align*}
From formula $(2)$,
\begin{align}
\hat{\Psi}_{n}(z;f)=-\frac{d}{dz}\left(\hat{\Psi}_{n-1}(z;f)\frac{\left(z-\frac{f}{1+f}\right)\left(z+\frac{1}{1+f}\right)}{(1+f)z}\right)
\ \text{for}\ n\geq 1.
\end{align}
and
\begin{align*}
\hat{\Psi}_0(z;f)=-\frac{1}{(1+f)^2z}.
\end{align*}

By the recursion formula $(4)$, it is easy to show that
$\hat{\Psi}_n(z;f)$ has the following form
\begin{align*}
\hat{\Psi}_n(z;f)=\frac{a_0(f)+a_1(f)z+\cdots+a_{2n}(f)z^{2n}}{((1+f)z)^{2n+2}}
\end{align*}
where $a_{0}(f),..,a_{2n}(f)$ are some polynomials of framing $f$.

From the residue identity,
\begin{align*}
0=Res_{z=0}d(f(z)g(z))=Res_{z=0}g(z)df(z)+Res_{z=0}f(z)dg(z)
\end{align*}
We have
\begin{align}
Res_{z=0}g(z)df(z)=-Res_{z=0}f(z)dg(z).
\end{align}

Formula $(5)$ will be used iteratively in the following exposition.

When $n=0$,
\begin{align*}
&Res_{y=\frac{-f}{1+f}}\theta(y)\Psi_{0}(y;f)\\
&=-Res_{y=-\frac{f}{1+f}}\theta(y)\hat{\Psi}_0(y;f)dy\\\nonumber
&=-Res_{z=0}\theta(z)\hat{\Psi}_{0}(z;f)dz\\\nonumber
&=-Res_{z=0}\theta(z)\frac{1}{(1+f)^2}d\left(\frac{1}{z}\right)\\\nonumber
&=Res_{z=0}\frac{1}{(1+f)^2z}d\theta(z)\\\nonumber
&=Res_{z=0}\frac{\log\left(z-\frac{f}{1+f}\right)}{(1+f)\left(z-\frac{f}{1+f}\right)\left(z+\frac{1}{1+f}\right)}\\\nonumber
&=0.
\end{align*}

When $n=1$,
\begin{align*}
&Res_{y=\frac{-f}{1+f}}\theta(y)\Psi_{1}(y;f)\\
&=-Res_{y=-\frac{f}{1+f}}\theta(y)\hat{\Psi}_1(y;f)dy\\\nonumber
&=-Res_{z=0}\theta(z)\hat{\Psi}_{1}(z;f)dz\\\nonumber
&=Res_{z=0}\theta(z)d
\left(\hat{\Psi}_0(z;f)\frac{\left(z-\frac{f}{1+f}\right)\left(z+\frac{1}{1+f}\right)}{(1+f)z}\right)\\\nonumber
&=-Res_{z=0}\hat{\Psi}_0(z;f)\log\left(z-\frac{f}{1+f}\right)dz\\\nonumber
&=Res_{z=0}\frac{1}{(1+f)^2z^2}\log\left(z-\frac{f}{1+f}\right)dz\\\nonumber
&=-\frac{1}{f(1+f)}
\end{align*}

More generally, when $n\geq 2$
\begin{align*}
&Res_{y=\frac{-f}{1+f}}\theta(y)\Psi_{n}(y;f)\\
&=-Res_{y=-\frac{f}{1+f}}\theta(y)\hat{\Psi}_n(y;f)dy\\\nonumber
&=-Res_{z=0}\theta(z)\hat{\Psi}_{n}(z;f)dz\\\nonumber
&=Res_{z=0}\theta(z)d\left(\hat{\Psi}_{n-1}(z;f)\frac{\left(z-\frac{f}{1+f}\right)\left(z+\frac{1}{1+f}\right)}{(1+f)z}\right)\\\nonumber
&=-Res_{z=0}\hat{\Psi}_{n-1}(z;f)\frac{\left(z-\frac{f}{1+f}\right)\left(z+\frac{1}{1+f}\right)}{(1+f)z}
d\theta(z)\\\nonumber &=-Res_{z=0}\hat{\Psi}_{n-1}(z;f)\log\left(z-
\frac{f}{1+f}\right)dz\\\nonumber
&=Res_{z=0}d\left(\hat{\Psi}_{n-2}(z;f)\frac{\left(z-\frac{f}{1+f}\right)\left(z+\frac{1}{1+f}\right)}{(1+f)z}\right)
\log\left(z-\frac{f}{1+f}\right)\\\nonumber
&=-Res_{z=0}\hat{\Psi}_{n-2}(z;f)\frac{\left(z+\frac{1}{1+f}\right)}{(1+f)z}\\\nonumber
&=-Res_{z=0}\frac{a_0(f)+a_1(f)z+\cdots+a_{2n-4}(f)z^{2n-4}}{((1+f)z)^{2n-2}}\left(\frac{1}{(1+f)}+\frac{1}{(1+f)^2z}\right)\\\nonumber
&=0.
\end{align*}
\end{proof}

Now, we can finish the proof of Conjecture 1.1 by lemma 3.1 and 3.2.
\begin{proof}
\begin{align*}
F^{(g)}&=\frac{(-1)^g}{2-2g}Res_{y=-\frac{f}{1+f}}\theta(y)W^{(g,1)}(y)\\\nonumber
&=\frac{1}{2g-2}\sum_{n=1}^{3g-2}\langle\tau_n\Lambda_g^{\vee}(1)\Lambda_{g}^{\vee}(-f-1)\Lambda_g^{\vee}(f)\rangle_g
Res_{y=-\frac{f}{1+f}}\theta(y)\Psi_n(y;f)\\\nonumber
&=-\frac{1}{2g-2}\frac{1}{f(1+f)}\langle\tau_1\Lambda_g^{\vee}(1)\Lambda_{g}^{\vee}(-f-1)\Lambda_g^{\vee}(f)\rangle_g\\\nonumber
&=\frac{(-1)^g}{2g-2}\langle\tau_1\lambda_g\lambda_{g-1}\lambda_{g-2}\rangle_g\\\nonumber
&=(-1)^g\langle\lambda_g\lambda_{g-1}\lambda_{g-2}\rangle_g\\\nonumber
&=(-1)^g\frac{1}{2(2g-2)!}\frac{|B_{2g-2}|}{2g-2}\frac{|B_{2g}|}{2g}
\end{align*}
where we have used the dilaton equation for Hodge integrals
\begin{align*}
\langle\tau_1\lambda_g\lambda_{g-1}\lambda_{g-2}\rangle_g=(2g-2)\langle\lambda_{g}\lambda_{g-1}\lambda_{g-2}\rangle_g.
\end{align*}
Thus the Conjecture 1.1 is proved.
\end{proof}

{\bf Acknowledgements.} The author would like to thank Professor
Kefeng Liu for his interest in this work.

\vskip 30pt
$$ \ \ \ \ $$

\end{document}